\theoremstyle{plain}
\newtheorem{theorem}{Theorem}
\newtheorem{proposition}[theorem]{Proposition}
\newtheorem{question}{Question}
\def\Q{\mathbb{Q}}
\theoremstyle{definition}
\newtheorem{definition}{Definition}
\newtheorem{remark}{Remark}
\title[Rational Diophantine sextuples with strong pair]
{Rational Diophantine sextuples with strong pair}
\begin{document}

\date{}


\author[A. Dujella]{Andrej Dujella}
\address{
Department of Mathematics\\
Faculty of Science\\
University of Zagreb\\
Bijeni{\v c}ka cesta 30, 10000 Zagreb, Croatia
}
\email[A. Dujella]{duje@math.hr}

\author[M. Kazalicki]{Matija Kazalicki}
\address{
Department of Mathematics\\
Faculty of Science\\
University of Zagreb\\
Bijeni{\v c}ka cesta 30, 10000 Zagreb, Croatia
}
\email[M. Kazalicki]{matija.kazalicki@math.hr}

\author[V. Petri\v{c}evi\'c]{Vinko Petri\v{c}evi\'c}
\address{
Faculty of Electrical Engineering, Computer Science and Information Technology Osijek\\
Josip Juraj Strossmayer University of Osijek\\
Kneza Trpimira 2B, HR-31000 Osijek, Croatia
}
\email[V. Petri\v{c}evi\'c]{vinko.petricevic@ferit.hr}

\begin{abstract}
A set of $m$ distinct nonzero rationals $\{a_1, a_2,\ldots, a_m\}$ such that $a_i a_j+1$ is a perfect square for all $1\le i <j \le m$, is called a rational Diophantine $m$-tuple. If in addition, $a_i^2+1$ is a perfect square for $1\le i\le m$, then we say the $m$-tuple is strong. In this paper, we construct infinite families of rational Diophantine sextuples containing a strong Diophantine pair.

\end{abstract}

\subjclass[2010]{Primary 11D09; Secondary 11G05}
\keywords{rational Diophantine sextuples}

\maketitle

\section{Introduction}

A Diophantine $m$-tuple is a set of $m$ distinct positive
integers with the property that the product of any two of its distinct
elements plus $1$ is a square. Fermat found the first Diophantine quadruple in integers $\{1,3,8,120\}$.
If a set of $m$ nonzero rationals
has the same property, then it is called
a rational Diophantine $m$-tuple. If in addition, a rational Diophantine $m$-tuple has the property that the square of each element plus $1$ is a square, we say that it is {\bf strong}.
The first example of a rational Diophantine quadruple was the set
$$
\left\{\frac{1}{16},\, \frac{33}{16},\, \frac{17}{4},\, \frac{105}{16}\right\}
$$
found by Diophantus. Euler proved that the exist
infinitely many rational Diophantine quintuples (see \cite{Hea}),
in particular he was able to extend the integer Diophantine quadruple found by Fermat,
to the rational quintuple
$$
\left\{ 1, 3, 8, 120, \frac{777480}{8288641} \right\}.
$$
Stoll \cite{Stoll} recently showed that this extension is unique.
Therefore, the Fermat set $\{1,3,8,120\}$ cannot be extended to a rational Diophantine sextuple.

In 1969, using linear forms in logarithms of algebraic numbers and a reduction method
based on continued fractions, Baker and Davenport \cite{B-D}
proved that if $d$ is a positive integer such that
$\{1, 3, 8, d\}$ forms a Diophantine quadruple, then $d$ has to be $120$.
This result motivated the conjecture that there does not exist a Diophantine quintuples in integers.
The conjecture has been proved recently by He, Togb\'e and Ziegler \cite{HTZ}
(see also \cite{duje-crelle}).

In the other hand, it is not known how large can be a rational Diophantine tuple.
In 1999, Gibbs found the first example of rational Diophantine sextuple \cite{Gibbs1}
$$
\left\{ \frac{11}{192}, \frac{35}{192}, \frac{155}{27}, \frac{512}{27}, \frac{1235}{48}, \frac{180873}{16} \right\}.
$$
In 2017 Dujella, Kazalicki, Miki\'c and Szikszai \cite{DKMS} proved that there are
infinitely many rational Diophantine triples that can be extended to a Diophantine sextuple in infinitely many ways, while
Dujella and Kazalicki \cite{Duje-Matija} (inspired by the work of Piezas \cite{P})
described another construction of parametric families of rational Diophantine sextuples.
Dujella, Kazalicki and Petri\v{c}evi\'c \cite{DKP-sext}
proved that there are infinitely many rational Diophantine sextuples
such that denominators of all the elements (in the lowest terms) in the sextuples are perfect squares, and also proved \cite{DKP-reg} that there are infinitely many rational Diophantine sextuples containing two regular quadruples and one regular quintuple.
No example of a rational Diophantine septuple is known.
Lang's conjecture on varieties of general type implies that
the number of elements in a rational Diophantine tuple is bounded by an absolute constant
(for more details, see the introduction of \cite{DKMS}).
For additional information on Diophantine $m$-tuples, refer to the survey article \cite{Duje-notices} and the book \cite{duje-book}.

In this paper, we study rational Diophantine sextuples which contain a strong elements (i.e. the elements $a$ with the property that $a^2+1$ is a perfect square).

Denote by $C$ an affine curve given by the equation $p(u,v)=0$ where
\begin{align*}
	p(u, v)  = & \,\, 3u^4v^4 - 8u^4v^3 + 6u^4v^2 - u^4 \\
	& - 8u^3v^4 + 4u^3v^3 - 8u^3v^2 + 12u^3v + 6u^2v^4 \\
	& - 8u^2v^3 + 4u^2v^2 + 8u^2v + 6u^2 + 12uv^3 + 8uv^2 \\
	& + 4uv + 8u - v^4 + 6v^2 + 8v + 3.
\end{align*}

The curve $C$ is birationally equivalent to the elliptic curve $$E:\quad y^2 + xy + y = x^3 - 33x + 68.$$ Torsion subgroup of Mordell-Weil group of $E/\Q$ is generated by the point $[-1,10]$ of order $6$, while the free part of the group is generated by the point $[11/4,-25/8]$. In particular, $E$ has infinitely many rational points.

Define three parametric families

\begin{align*}
	\mathcal{F}_1(u,v) &=\begin{bmatrix}
		\frac{2u}{(u-1)(u+1)}, & \frac{2v}{(v-1)(v+1)}, & \frac{2(v-1)(v+1)(u-1)(u+1)}{(-v+uv-u-1)^2}
	\end{bmatrix}, \\
\mathcal{F}_2(u,v) &=\begin{bmatrix}
		\frac{2u}{(u-1)(u+1)}, & -\frac{2(u-v)(uv+1)}{(uv+v+1-u)(uv-v+u+1)}, & -\frac{2(uv-v+u+1)(u^3v-u^3-v-1)}{(u-1)(u+1)(uv+v+1-u)^2}
	\end{bmatrix}, \\
	\mathcal{F}_3(u,v) &=\begin{bmatrix}
		-\frac{2v}{(v-1)(v+1)}, & -\frac{2(u-v)(uv+1)}{(uv+v+1-u)(uv-v+u+1)}, & \frac{2(uv+v+1-u)(v^3u-v^3-u-1)}{(uv-v+u+1)^2(v+1)(v-1)}
	\end{bmatrix}.
\end{align*}

By carefully selecting parameters $(u,v)$, we can utilize methods described in \cite{DKMS} to extend Diophantine triples to Diophantine sextuples, thus deriving our main result.

\begin{theorem}\label{thm:1} If $(u,v)\in C(\Q)$, then each triple $\mathcal{F}_i(u,v)$ is a rational Diophantine triple (provided that all the elements are defined, distinct and nonzero), whose first two elements form a strong Diophantine pair. Moreover, each such $\mathcal{F}_i(u,v)$ can be extended to a rational Diophantine sextuple in infinitely many ways.
	
\end{theorem}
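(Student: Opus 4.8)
The plan is to split the statement into two parts: first, that each $\mathcal{F}_i(u,v)$ is a rational Diophantine triple whose first two entries form a strong pair whenever $(u,v)\in C(\Q)$; second, that such a triple extends to a rational Diophantine sextuple in infinitely many ways via the construction of \cite{DKMS}. Throughout I would treat $\mathcal{F}_1$ in detail and note that $\mathcal{F}_2,\mathcal{F}_3$ are entirely analogous, with the roles of $u,v$ and the explicit auxiliary square roots adjusted.

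For the first part write $\mathcal{F}_i(u,v)=[a_1,a_2,a_3]$. The basic fact is the identity
\[
\left(\frac{2XY}{Y^2-X^2}\right)^{\!2}+1=\left(\frac{X^2+Y^2}{Y^2-X^2}\right)^{\!2},\qquad X,Y\in\Q,
\]
which shows that any rational of the shape $\tfrac{2XY}{Y^2-X^2}$ is strong, and the first two entries of every $\mathcal{F}_i$ are of this shape: in $\mathcal{F}_1$ with $(X,Y)=(1,u)$ and $(1,v)$; in $\mathcal{F}_2$ with $(1,u)$ and, after the factorisation $(uv+v+1-u)(uv-v+u+1)=(uv+1)^2-(v-u)^2$, with $(v-u,\,uv+1)$; in $\mathcal{F}_3$ with $(1,-v)$ and the same $(v-u,\,uv+1)$. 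Hence $a_1^2+1$ and $a_2^2+1$ are always squares. The third entry is built so that $a_1a_3+1$ and $a_2a_3+1$ are squares identically in $u,v$; for $\mathcal{F}_1$ one checks directly that
\[
a_1a_3+1=\left(\frac{uv-u+v+1}{uv-u-v-1}\right)^{\!2},\qquad a_2a_3+1=\left(\frac{uv+u-v+1}{uv-u-v-1}\right)^{\!2},
\]
and analogous polynomial identities hold for $\mathcal{F}_2,\mathcal{F}_3$. The only condition that is not automatic is that $a_1a_2+1$ be a square, and a short computation shows that in all three families this is equivalent to the single requirement that $D(u,v):=(u^2-1)(v^2-1)\bigl((uv+1)^2-(u-v)^2\bigr)$ be a nonzero rational square. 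This is exactly what the defining equation of $C$ encodes: one verifies an identity of the form $p(u,v)=g(u,v)^2-D(u,v)$ for an explicit polynomial $g$ of bidegree $(2,2)$, so that $p(u,v)=0$ forces $a_1a_2+1=\bigl(g(u,v)/((u^2-1)(v^2-1))\bigr)^2$. Together with the proviso that all entries are defined, distinct and nonzero, this proves the triple part of the theorem.

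For the second part I would invoke the method of \cite{DKMS}: it attaches to a rational Diophantine triple $T$ an elliptic curve $\mathcal{E}_T/\Q$ together with explicit rational functions producing, from each rational point of $\mathcal{E}_T$ outside a finite set, three elements enlarging $T$ to a rational Diophantine sextuple; so it suffices to know that $\mathcal{E}_{\mathcal{F}_i(u,v)}$ has positive Mordell--Weil rank. I would establish this uniformly: viewing $u,v$ as coordinates on $C$ and working over the function field $\Q(C)$, one writes down an explicit non-torsion section of $\mathcal{E}_{\mathcal{F}_i}$ over $\Q(C)$ — this is precisely the role of ``carefully selecting the parameters'' on $C$, the families $\mathcal{F}_i$ having been designed so that the \cite{DKMS} machinery goes through generically. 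Since $C$ is birationally equivalent to $E$, which has a point of infinite order and hence infinitely many rational points, $C(\Q)$ is infinite; by Silverman's specialisation theorem the section stays of infinite order for all but finitely many $(u,v)\in C(\Q)$, giving infinitely many sextuple extensions for infinitely many admissible $(u,v)$, and the remaining finitely many fibres are dealt with by direct inspection (most being already excluded by the degeneracy proviso).

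The displayed identities and reductions above are routine algebra; the genuine obstacle is the last step — exhibiting and checking a non-torsion point on $\mathcal{E}_{\mathcal{F}_i}$ over $\Q(C)\cong\Q(E)$ — since this is what makes the ``infinitely many ways'' conclusion possible. It is also where the particular curve $C$ is essential: one needs $C$ to have positive rank (equivalently, to be birational to the positive-rank $E$) both to supply infinitely many admissible base points $(u,v)$ and to make the sextuple extension persist along the family.
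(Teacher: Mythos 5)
There is a genuine gap, and it lies at the heart of the construction: your argument for the sextuple extension never uses (or even mentions) the condition that the point $S=[1,rst]$ on the induced curve $E_{a,b,c}$ has order $3$, i.e.\ that $S(a,b,c)=0$. The method of \cite{DKMS} does \emph{not} produce a sextuple from an arbitrary rational point on $E_{a,b,c}$; positive Mordell--Weil rank by itself only yields infinitely many extensions to \emph{quadruples} (via $d=x(T+P)/abc$). The three new elements $x(Q)/abc$, $x(Q+S)/abc$, $x(Q-S)/abc$ pairwise satisfy the Diophantine condition precisely because $S$ is $3$-torsion, and this torsion condition is exactly what the curve $C$ encodes: after substituting $a=\tfrac{2u}{u^2-1}$, $b=\tfrac{2v}{v^2-1}$ and the regular value of $c$, the polynomial $S(a,b,c)$ factors and $p(u,v)$ is the relevant factor. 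Your reading of $C$ as ``the locus where $a_1a_2+1$ is a square'' is therefore backwards, and your plan for the second part (a non-torsion section over $\Q(C)$ plus Silverman specialisation) would not prove the sextuple statement even if carried out. In the paper, the fact that $a_1a_2+1$ becomes a square on $C$ is a separate, additional coincidence, established by the identity $p(u,v)+t(u,v)=(uv+1)^2(uv-u-v-1)^2$, where $t$ is the numerator times the denominator of $ab+1$.

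A secondary problem is in the first part. For $\mathcal{F}_1$ your displayed identities for $a_1a_3+1$ and $a_2a_3+1$ are correct, but the assertion that ``analogous polynomial identities hold for $\mathcal{F}_2,\mathcal{F}_3$'' fails: for $\mathcal{F}_2$ the product $a_2a_3+1$ is \emph{not} identically a square in $(u,v)$ (e.g.\ at $(u,v)=(2,5)$ it equals $200/343$); only $a_1a_3+1$ and the combination $(a_1a_2+1)(a_2a_3+1)$ are identically squares there. The paper handles this uniformly by exploiting the symmetry of the regularity polynomials $r_4$ and $r_5$: regularity of the improper quadruple/quintuple forces the relevant products to lie in the same class of $\Q^\times/(\Q^\times)^2$, reducing everything to the single condition that $ab+1$ be a square, which is then checked on $C$ via the identity above. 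You would need either that reduction or a family-by-family verification of which products require the curve equation.
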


\begin{remark}\label{rem:1}
	Note that $\mathcal{F}_2(v,u)=-\mathcal{F}_3(u,v)=\mathcal{F}_3(-u,1/v)$ for all pairs $(u,v)$. Therefore, since the mappings $(u,v)\mapsto (v,u)$ and $(u,v)\mapsto (-u,1/v)$ are the automorphisms of the curve $\mathcal{C}$, the families $\mathcal{F}_2$ and $\mathcal{F}_3$ are parameterizing the same sets of triples.
\end{remark}
As a corollary, we obtain the following result.

\begin{theorem}
	There are infinitely many rational Diophantine sextuples that contain a strong Diophantine pair.
\end{theorem}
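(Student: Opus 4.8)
The plan is to deduce this from Theorem~\ref{thm:1} together with the infinitude of $E(\Q)$, the only real work being to check that the non-degeneracy proviso of Theorem~\ref{thm:1} holds for all but finitely many points of the curve and that the sextuples so produced are genuinely distinct. Fix $i\in\{1,2,3\}$, say $i=1$, and let $\phi\colon E\dashrightarrow C$ be (the inverse of) the birational equivalence described in the excerpt; it is defined over $\Q$ and regular outside a finite subset of $E$. Since the point $[11/4,-25/8]$ has infinite order, $E(\Q)$ is infinite, so $\phi$ sends all but finitely many points of $E(\Q)$ to an infinite set of rational points of $C$.

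Next I would discard the degenerate points. Composing with $\phi$, each of the three coordinates of $\mathcal{F}_1$, each difference of two of them, and each denominator appearing in $\mathcal{F}_1$ becomes a rational function on the irreducible curve $E$. I claim none of these functions vanishes identically on $E$: for this it suffices to exhibit a single $P_0\in E(\Q)$ for which $\mathcal{F}_1(\phi(P_0))$ is a bona fide rational Diophantine triple with three distinct nonzero entries, which one verifies on an explicit numerical example. Granting this, the set of $P\in E(\Q)$ at which $\mathcal{F}_1(\phi(P))$ is undefined, has a zero coordinate, or has two equal coordinates is finite; removing it (together with the finite indeterminacy locus of $\phi$) from $E(\Q)$ leaves an infinite set $B$ on which $\mathcal{F}_1\circ\phi$ takes values that are rational Diophantine triples of the type described in Theorem~\ref{thm:1}, with the first two entries always forming a strong Diophantine pair.

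Finally I would extend and count. By Theorem~\ref{thm:1}, for each $P\in B$ the triple $\mathcal{F}_1(\phi(P))$ lies inside a rational Diophantine sextuple $S_P$, which therefore contains the strong Diophantine pair given by its first two elements $a_1(P)=\tfrac{2u}{(u-1)(u+1)}$, $a_2(P)=\tfrac{2v}{(v-1)(v+1)}$. The function $a_1$ pulls back to a non-constant rational function on $E$ (non-constant because the witness point already shows it is not constant on the image of $\phi$), hence it takes infinitely many distinct values as $P$ runs over $B$, a non-constant rational map of curves having finite fibres. Were the family $\{S_P : P\in B\}$ finite, only finitely many two-element subsets of rationals would occur among these sextuples, so $\{a_1(P),a_2(P)\}$, and in particular $a_1(P)$, would take finitely many values, a contradiction. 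Hence $\{S_P : P\in B\}$ is an infinite family of rational Diophantine sextuples, each containing a strong Diophantine pair.

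The step I expect to be the only genuine obstacle is the verification that $\mathcal{F}_1\circ\phi$ is generically non-degenerate, i.e.\ producing the witness point $P_0$ — equivalently, discharging the parenthetical hypothesis of Theorem~\ref{thm:1}; everything else is the routine observation that a non-constant rational function on an elliptic curve of positive rank vanishes at only finitely many rational points and takes infinitely many values there.
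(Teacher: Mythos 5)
Your proof is correct and is essentially the paper's own (implicit) argument: the paper states this result simply as a corollary of Theorem~\ref{thm:1} together with the infinitude of rational points on $C$, and you are supplying the routine non-degeneracy and distinctness bookkeeping, with the required witness point available explicitly (e.g.\ $(u_0,v_0)=(-119/128,-135/169)$, giving $\{30464/2223,\,22815/5168,\,361/7956\}$). Two minor remarks: a single witness point $P_0$ does not by itself establish that $a_1$ is non-constant on $E$ --- for that, note instead that $u$ is non-constant on the irreducible curve $C$ and $t\mapsto 2t/(t^2-1)$ is non-constant, or exhibit two points with distinct $a_1$-values --- and the ``in infinitely many ways'' clause of Theorem~\ref{thm:1} already yields the corollary from a single non-degenerate point of $C(\Q)$, which would let you bypass most of the counting over $E(\Q)$.
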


\section{Induced elliptic curves and overview of \cite{DKMS}}\label{sec:induced}

To extend a rational Diophantine triple $\{a,b,c\}$ to a quadruple, we need to find $d\in \Q$ for which
$ad+1, bd+1$ and $cd+1$ are perfect squares. Such $d$ naturally defines a rational point on the elliptic curve $y^2=(ax+1)(bx+1)(cx+1)$ which
is isomorphic (via transformation $x\mapsto x/abc, y\mapsto y/abc$) to the curve
$$E_{a,b,c}:\quad y^2=(x+ab)(x+ac)(x+bc).$$
Conversely, the two descent argument implies that each $d$ is equal to $x(T+P)/abc$ for some $T\in 2E_{a,b,c}(\Q)$ and $P=[0,abc]\in E_{a,b,c}(\Q) $ (see Proposition 1 in \cite{duje-bordo}).

Besides the rational points of order $2$, $$T_1=[-ab,0],\quad T_2=[-ac,0], \quad T_3=[-bc,0],$$ we will also need rational point
$S=[1,rst]\in E_{a,b,c}(\Q),$
where $ab+1=r^2$, $ac+1=s^2$ and $bc+1=t^2$, for some $r,s,t \in \Q$.
Note that $S=2R$, where $R=[rs+rt+st,(r+s)(r+t)(s+t)]$. In the case when $\{a,b\}$ is a strong pair, we have two more rational points
$$A=[a\cdot abc,abc\cdot rsu], \quad B=[b \cdot abc,abc \cdot rtv]\in E_{a,b,c}(\Q),$$
where  $a^2+1=u^2$ and $b^2+1=v^2$ for some $u,v \in \Q$.

The main result of \cite{DKMS} states that if $\{a,b,c\}$ is a rational Diophantine triple such that the point $S$ on induced elliptic curve $E_{a,b,c}$ has order $3$, then for each integer~$n$
$$\left\{a,b,c,\frac{x([2n+1]P)}{abc},\frac{x([2n+1]P+S)}{abc},\frac{x([2n+1]P-S)}{abc}\right\}$$
is a rational Diophantine sextuple. Moreover, Lemma 1 in \cite{DKMS} shows that the order of $S$ is $3$ if and only if $S(a,b,c)=0$ where
$$S(a,b,c)=3+4(ab+ac+bc)+6abc(a+b+c)-(abc)^2(-12+a^2 + b^2 + c^2 - 2ab - 2ac - 2bc).$$

Thus we are led to the following question.
\begin{question}
	Are there infinitely many rational Diophantine triples $\{a,b,c\}$ for which $a^2+1$ and $b^2+1$ are perfect squares and $S(a,b,c)=0$?
	We refer to such triples as {\bf special}.
\end{question}

For an affirmative answer to this question, one would need to find a curve of genus zero or one (with infinitely many rational points) on the surface of the general type, which is a $32$-cover of the surface $S(a,b,c)=0$. This surface is defined by the condition that $ab+1$, $ac+1$, $bc+1$, $a^2+1$, and $b^2+1$ are perfect squares. In general, this is a difficult problem, so we sought inspiration from experimental data.

\section{Experiments and regularity} \label{sec:experiments}

Our key insight came from examining numerical examples of special Diophantine triples

\begin{center}
	\{30464/2223, 22815/5168, 361/7956\},\\
	\{30464/2223, 4807/31824, 10881/1292\},\\
	\{-22815/5168, 4807/31824, -8092/2223\}.
\end{center}	

To understand these examples, it is necessary to introduce the concept of regularity (see \cite{DKP-reg,DP2}).

\begin{definition}
	The quadruple $(a,b,c,d)\in \Q^4$ is called {\bf regular} if $r_4(a,b,c,d)=0$ where
	$$r_4(a,b,c,d)=(a+b-c-d)^2-4(ab+1)(cd+1).$$ Similarly, the quintuple $(a,b,c,d,e)$ is regular if $r_5(a,b,c,d,e)=0$
	where $$r_5(a,b,c,d,e)=(abcde+2abc+a+b+c-d-e)^2-4(ab+1)(ac+1)(bc+1)(de+1).$$ 
	Note that polynomials $r_4$ and $r_5$ are symmetric.
\end{definition}

In the examples above, we noticed that for the first triple $\{a,b,c\}$ the (improper) quintuple $\{a,a,b,b,c\}$ is regular, i.e. $r_5(a,a,b,b,c)=0$.
Similarly, for the second and third triple the (improper) quadruple $\{a,b,b,c\}$ is regular, i.e. $r_4(a,b,b,c)=0$. Furthermore, the elliptic curves associated to these Diophantine triples are isomorphic to each other.

These regularity conditions can be restated in the context of the arithmetic of the elliptic curve $E_{a,b,c}$.

\begin{proposition}\label{prop:regularity}
	Let $\{a,b,c\}$ be a rational Diophantine triple containing a strong pair $\{a,b\}$. Let $A$, $B$, $P$, and $S$ be points in $E_{a,b,c}(\Q)$ as defined in Section \ref{sec:induced}.
	We have that
	\begin{enumerate}
		\item [a)] $r_4(a,a,b,c)=0$ if and only if $A= \pm P \pm S$ for some choice of signs,
		\item [b)] $r_5(a,a,b,b,c)=0$ if and only if $A\pm B \pm S=\mathcal{O}$ for some choice of signs.
	\end{enumerate}
\end{proposition}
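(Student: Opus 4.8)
The plan is to translate each regularity condition into a statement about the $x$-coordinates of specific points on $E_{a,b,c}$, using the standard correspondence between extensions of a Diophantine triple and rational points on the induced curve. Recall from Section \ref{sec:induced} that if $\{a,b,c,d\}$ is a Diophantine quadruple then $d = x(T+P)/abc$ for some $T \in 2E_{a,b,c}(\Q)$; conversely, the points $A$ and $B$ are the points whose $x$-coordinates (divided by $abc$) return the ``extension'' $d=a$ and $d=b$ respectively — indeed $A=[a\cdot abc, abc\cdot rsu]$ sits above $x/abc = a$, which encodes that $a\cdot a+1=u^2$, $a\cdot b+1=r^2$, $a\cdot c+1=s^2$ are all squares, i.e. that $\{a,b,c,a\}$ is an (improper) Diophantine quadruple. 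Similarly $P=[0,abc]$ lies above $d=0$ and $S=[1,rst]$ is the point of order dividing $3$ (when the triple is special) attached to $r,s,t$.

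For part (a), the first step is to observe that $r_4(a,a,b,c)=0$ is precisely the regularity condition for the quadruple $(a,a,b,c)$, and that regularity of a quadruple $(a,b,c,d)$ is equivalent to $d$ being the ``other'' value obtained from $\{a,b,c\}$ via a fixed point of order two — concretely, that the two square roots giving $d$ from $\{a,b,c\}$ via $E_{a,b,c}$ differ by translation by one of $T_1,T_2,T_3$, or equivalently that the point above $d$ equals $\pm P \pm S + T_i$ for appropriate signs and index, but since $T_i \in 2E_{a,b,c}(\Q)$ can be absorbed, it collapses to $A = \pm P \pm S$. I would make this precise by writing out the $x$-coordinate of $\pm P \pm S$ using the group law (this is a rational function of $ab,ac,bc$ and $r,s,t$ since $P=[0,abc]$ and $S=[1,rst]$ have small coordinates), equating it with $x(A)/1 = a\cdot abc$, clearing denominators, and checking that the resulting polynomial identity in $a,b,c$ (with $ab+1=r^2$, etc.\ substituted) is exactly a multiple of $r_4(a,a,b,c)$. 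The sign ambiguity corresponds to the fact that $r_4(a,a,b,c)=0$ is a single quadratic condition while there are four points $\pm P \pm S$, so one shows the locus where $A$ equals any one of them is the same.

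For part (b), the step is analogous but now involves $B$ as well. The quintuple regularity $r_5(a,a,b,b,c)=0$ is the statement that $(a,a,b,b,c)$ is a regular quintuple; by the elliptic-curve interpretation of quintuple regularity (as in \cite{DKP-reg}), a regular quintuple $\{a,b,c,d,e\}$ built on the triple $\{a,b,c\}$ corresponds to $d,e$ coming from points $D,E$ on $E_{a,b,c}$ with $D \pm E \pm S = \mathcal O$ (the point $S$ entering because $r_5$ involves the factor $(ab+1)(ac+1)(bc+1)$, which is $x(S)$-related). Specializing $d=a$, $e=b$ gives $D=A$, $E=B$ and hence $A \pm B \pm S = \mathcal O$. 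Again I would verify this by a direct computation: the condition $A+B+S=\mathcal O$ says $A,B,-S$ are collinear, i.e. the determinant of the $3\times 3$ matrix with rows $(x(A),y(A),1)$, $(x(B),y(B),1)$, $(x(S),-y(S)-x(S)\cdot(\text{linear terms}),1)$ vanishes — expand, substitute the square conditions, and match against $r_5(a,a,b,b,c)$ up to a nonzero factor; the other sign choices are handled by replacing $A\to -A$ or $B\to -B$, which only flips $y$-coordinates and leads to the same symmetric polynomial $r_5$.

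The main obstacle is the bookkeeping in these collinearity/group-law computations: the curve $E_{a,b,c}$ is in the form $y^2=(x+ab)(x+ac)(x+bc)$, so it is not in short Weierstrass form and the addition formulas carry the $a_1,\dots,a_6$ coefficients; moreover $r$ is only defined up to sign (similarly $s,t,u,v$), so one must check the statement is insensitive to those sign choices — which is exactly why the ``$\pm$'' and ``for some choice of signs'' appear. I expect that after a symmetry reduction (use that $r_4$ and $r_5$ are symmetric, and that swapping $a \leftrightarrow b$ swaps $A \leftrightarrow B$ and $s \leftrightarrow t$, $u\leftrightarrow v$) the identity to be verified is a manageable polynomial identity that can be confirmed by a computer algebra check; the conceptual content is entirely in identifying $A$ and $B$ as the points lying above $d=a$ and $d=b$, after which both equivalences are instances of the known elliptic-curve reformulation of regularity.
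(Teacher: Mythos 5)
Your proposal is correct and follows essentially the same route as the paper: the paper simply cites the known elliptic-curve characterizations of $r_4(a,b,c,d)=0$ (equivalent to $D=\pm P\pm S$) and $r_5(a,b,c,d,e)=0$ (equivalent to $E=\pm D\pm S$) from Section 3.1 of \cite{duje-book} and applies them with $D=A$ and $E=B$, which is exactly your identification of $A$ and $B$ as the points lying above $d=a$ and $e=b$. The explicit group-law and collinearity computations you sketch would merely re-derive those cited facts (and note two small slips that do not affect the outcome: on $y^2=f(x)$ negation is just $(x,-y)$ with no linear terms, and the $2$-torsion points $T_i$ are not in general elements of $2E_{a,b,c}(\Q)$, but the cited characterization needs no $T_i$ at all).
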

\begin{proof}
It is known (see Section 3.1 of \cite{duje-book}) that for a Diophantine triple $\{a,b,c\}$, $r_4(a,b,c,d)=0$ if and only if $d=x(P\pm S)$, or equivalently $D=\pm P \pm S$ for some choice of signs, where $D\in E_{a,b,c}(\Q)$ and $x(D)=d$. Similarly, for a Diophantine quintuple $\{a,b,c,d\}$, $r_5(a,b,c,d,e)=0$ if and only if $e=x(D \pm S)$ or equivalently $E=\pm D \pm S$ for some choice of signs, where $E\in E_{a,b,c}(\Q)$ and $x(E)=e$.

Both claims follow when we apply these results to $E_{a,b,c}$ and points $D=A$ and $E=B$.

\end{proof}

\section{Proof of Theorem \ref{thm:1}}

To construct family $\mathcal{F}_1$, we proceed as follows. Set $a=\frac{2u}{u^2-1}$ and $b=\frac{2v}{v^2-1}$ to ensure that $a^2+1$ and $b^2+1$ are perfect squares. If we substitute these values in $$r_5(a,a,b,b,c)=(abc)^2-2a c^2 b-4 a c+c^2-4cb-4$$ the resulting expression factors as $r_5(a,a,b,b,c)=q_1 q_2$ where
$$q_1=u^2v^2c + 2ucv^2 + 2cvu^2 + cv^2 - 2cv + c - 2uc + cu^2 + 2 - 2v^2 - 2u^2 + 2u^2v^2,$$
$$q_2=cv^2 - 2ucv^2 + 2cv + u^2v^2c - 2cvu^2 + cu^2 + 2uc + c - 2 + 2v^2 - 2u^2v^2 + 2u^2.$$
Solving for $c$ in $q_2=0$ we obtain two solution one of which is
$$c=\frac{2(u^2v^2 - u^2 - v^2 + 1)}{(-v + uv - u - 1)^2}.$$ If we substitute all this in $S(a,b,c)=0$, the expression factors as $s_1s_2s_3$ where
\begin{align*}
	s_1&=1 + 8vu^4 - 8u^3v^2 - 8v^3u^2 + 4vu^3 + 8uv^2 - 8v^3 + 8vu^2 \\
	&+ 8uv^4 + 12v^3u^3 + 4uv^3 + 12uv - 4u^2v^2 - 6u^2v^4 + u^4v^4 \\
	&- 6u^4v^2 - 6u^2 - 6v^2 - 3v^4 - 3u^4 - 8u^3,\\
	s_2&=3 - 8u^3v^2 + 8u - 8v^3u^2 + 12vu^3 + 8v - 8v^3u^4 \\
	&- 8u^3v^4 + 8uv^2 + 8vu^2 + 4v^3u^3 + 12uv^3 + 4uv \\
	&+ 4u^2v^2 + 6u^2v^4 + 3u^4v^4 + 6u^4v^2 + 6u^2 + 6v^2 - v^4 - u^4,\\
    s_3&=(uv + v - u + 1)^2 (-v + uv + u + 1)^2.
\end{align*}

Note that factor $s_2$ is equal to $p(u,v)$ from the definition of curve $C: p(u,v)=0$, thus given a rational point $(u,v)$ on $C$,
we obtain the triple $\mathcal{F}_1(u,v)$ from the introduction. The curve defined by $s_1=0$ is isomorphic to $C$.

It remains to show that $\{a,b,c\}$ is a Diophantine triple (note that a priori we
only know that $a^2+1$ and $b^2+1$ are perfect squares). To this end, it is important to notice that for regular quintuple $\{a,b,c,d,e\}$, not necessary Diophantine,
we have that $(ab+1)(ac+1)(bc+1)(de+1)$ is a perfect square for every permutation of elements (since polynomial $r_5(a,b,c,d,e)$ is symmetric). In particular, the regularity of $\{a,a,b,b,c\}$ implies that $a^2+1, b^2+1, ac+1$ and $bc+1$ represent the same class modulo squares (i.e. they are equal in $\Q^\times/{\Q^\times}^2$).
Since by construction $a^2+1$ is a perfect square, it remains to prove that $ab+1$  is a perfect square.

Let $t(u,v)$ denote the product of the denominator and numerator of $ab+1$. Thus, we have
\[
\begin{aligned}
	t(u,v) &= u^4v^4 - 2u^4v^2 + u^4 + 4u^3v^3 - 4u^3v - 2u^2v^4 \\
	&\quad+ 4u^2v^2 - 2u^2 - 4uv^3 + 4uv + v^4 - 2v^2 + 1.
\end{aligned}
\]

It is straightforward to verify that
$$p(u,v)+t(u,v)=(uv+1)^2(uv-u-v-1)^2,$$
hence $t(u,v)$ is a perfect square (as is $ab+1$) whenever $p(u,v)=0$. Consequently,  the conclusion of Theorem \ref{thm:1} for $\mathcal{F}_1(u,v)$ follows.

The curve given by the equation $s_1(u,v)=0$ is isomorphic to the curve $\mathcal{C}$ via the mapping $\sigma:(u,v)\mapsto (\frac{1}{u},-v)$. Since $\sigma(a)=-a$, $\sigma(b)=-b$, and $\sigma(c)=-c$, we observe that employing a parametrization by the equation $s_1(u,v)=0$ yields the same family of triples. Similarly, since the surface $q_1(u,v,c)=0$ is isomorphic to the surface $q_2(u,v,c)=0$ via the mapping $(u,v,c)\mapsto (-u,-v,c)$, it follows that we do not get anything new by employing parametrization for $c$ given by condition $q_1=0$. It is straightforward to verify that the condition $s_3(u,v)=0$ leads to triples with repeated elements. Thus, we conclude that every special rational Diophantine triple $\{a,b,c\}$ satisfying $r_5(a,a,b,b,c)=0$ belongs to the family $\mathcal{F}_1$.

Similarly, to obtain the family $\mathcal{F}_2(u,v)$ in the regularity condition
\begin{equation}\label{eg:reg}
r_4(a,a,b,c)=-4 - 4ab + b^2 - 4ac - 2bc - 4a^2bc + c^2=0,
\end{equation}
we substitute $a=\frac{2u}{u^2-1}$ and $b=\frac{2v}{v^2-1}$,  yielding the condition  $r_1 r_2=0$ where
\begin{align*}
	r_1 &= -2 - c + 2cu + 2u^2 - cu^2 - 2v - 4uv - 2u^2v + 2v^2 + cv^2 - 2cuv^2 - 2u^2v^2 + cu^2v^2, \\
	r_2 &= 2 - c - 2cu - 2u^2 - cu^2 - 2v + 4uv - 2u^2v - 2v^2 + cv^2 + 2cuv^2 + 2u^2v^2 + cu^2v^2.
\end{align*}

By solving for $c$ in the equation $r_1(u,v,c)=0$ and substituting the result into $S(a,b,c)$, we obtain $S(a,b,c)=t_1 t_2 t_3=0$, where
\begin{align*}
	t_1 &= (1 + u - v + uv)^2 (1 - u + v + uv)^2, \\
	t_2 &= -3 + 8u - 6u^2 + u^4 - 16v + 4uv + 16u^2v - 4u^3v - 10v^2 \\
	& \quad - 48uv^2 - 4u^2v^2 - 2u^4v^2 + 16v^3 - 4uv^3 - 16u^2v^3 \\
	& \quad + 4u^3v^3 - 3v^4 + 8uv^4 - 6u^2v^4 + u^4v^4, \\
	t_3 &= -1 + 6u^2 - 8u^3 + 3u^4 - 4uv + 16u^2v + 4u^3v - 16u^4v \\
	& \quad + 2v^2 + 4u^2v^2 + 48u^3v^2 + 10u^4v^2 + 4uv^3 - 16u^2v^3 \\
	& \quad - 4u^3v^3 + 16u^4v^3 - v^4 + 6u^2v^4 - 8u^3v^4 + 3u^4v^4.
\end{align*}

In this manner, we obtain a triple ${a(u,v), b(u,v), c(u,v)}$ parametrized by points $(u,v)$ on the curve $\mathcal{D}:t_3(u,v)=0$. Note that the curve $\mathcal{D}$ is isomorphic to $\mathcal{C}$ through the mapping $\alpha: \mathcal{C} \rightarrow \mathcal{D}$, defined as $(u,v) \mapsto \left(\frac{-1+uv}{u+v}, -v\right)$. By precomposing the above parametrization with the map $\alpha$, we obtain the family $\mathcal{F}_2$.

It remains to show that $\mathcal{F}_2(u,v)$ is Diophantine triple. In general, the regularity condition $r_4(a,b,c,d)=0$ implies that $(ab+1)(cd+1)$ is a perfect square for all permutation of elements, as $r_4$ is symmetric polynomial. Thus, after combining the condition $r_4(a,a,b,c)=0$ with the requirement that $a^2+1$ is a perfect square, the remaining task is to establish that $ab+1$ (or equivalently $ac+1$) is also a perfect square. This is accomplished similarly to the case of the family $\mathcal{F}_1$. Similarly to before, we deduce that any special rational Diophantine triple $\{a,b,c\}$ satisfying $r_4(a,a,b,c)=0$ belongs to the family $\mathcal{F}_2$.

The statement for the family $\mathcal{F}_3$ follows from the observation that $\mathcal{F}_2(v,u)=\mathcal{F}_3(-u,1/v)$ as noted in Remark \ref{rem:1}. It follows from a discusion in Section \ref{sec:induced} that each of the triples from these families can be extended in infinitely many ways to a Diophantine sextuple.

It is intriguing that triples satisfying different regularity conditions are parameterized by the same curve. This implies that there could be a direct relationship between these families.

The observation that elliptic curves associated with the triples $\mathcal{F}_i(u,v)$, for $i=1,2,3$, are isomorphic to each other provides an answer to this question.

\section{Diophantine triples with isomorphic elliptic curves}

Let $\{a,b,c\}$ be a rational Diophantine triple for which $S \in E_{a,b,c}(\Q)$ has order $3$ (i.e. $S(a,b,c)=0$), and let $W\in E_{a,b,c}(\Q)$, $W\ne \pm S$ and $2W \ne \mathcal{O}$, be such that $1-x(W)$ is a perfect square. Write $1-x(W)=k^2$ for some $k\in \Q^\times$. We can choose the sign of $k$ such that it is equal to the sign of $y(W)$. Consider the change of variable and its inverse
\begin{equation*}
(x,y)\mapsto \left(\frac{x}{k^2}+1-\frac{1}{k^2},\frac{y}{k^3}\right),\quad (X,Y)\mapsto \left(k^2 X+1-k^2,k^3 Y\right),
\end{equation*}
which defines an isomorphism $\phi_W: E_{a,b,c} \rightarrow \tilde{E}$ where $\tilde{E}:Y^2=(X+A)(X+B)(X+C)$ for some distinct $A,B,C \in \Q$.
Note that $X(\phi_W(W))=0$, thus $ABC$ is a perfect square and $\frac{AB}{C}=c'^2, \frac{AC}{B}=b'^2$ and $\frac{BC}{A}=a'^2$ for some $a',b',c' \in \Q^\times$. We can choose signs of $a',b'$ and $c'$ such that $a'b'=C$, $a'c'=B$ and $b'c'=A$. It follows that $\tilde{E}=E_{a',b',c'}$. Since $X(\phi_W(S))=1$, and $\phi_W(S) \in 2 E_{a',b',c'}(\Q)$ (since $S\in 2E_{a,b,c}(\Q)$ and $\phi_W$ is a group isomorphism), we have that that $1+A,1+B$ and $1+C$ are perfect squares. Elements $a',b'$ and $c'$ are non-zero and distinct since $A,B$ and $C$ are non-zero and distinct, therefore $\{a',b',c'\}$ is a rational Diophantine triple. Moreover, since $\phi_W(S)=\pm S'$, it follows that $S'$ has order $3$, thus $S(a',b',c')=0$.

Conversely, let $\{a',b',c'\}$ be a rational Diophantine triple for which $S(a',b',c')=0$ and let $\phi:E_{a,b,c}\rightarrow E_{a',b',c'}$ be an isomorphism.
Denote by $W=\phi^{-1}(P')$, where $P'\in E_{a',b',c'}(\Q)$ with $X(P')=0$. Since $\phi^{-1}(X,Y)=(u^2 X+v, u^3 Y)$ for some $u,v \in \Q$, it follows from $\phi^{-1}(S')=\pm S$ that
$u^2+v=1$. Since $x(W)=v$, it follows that $1-x(W)$ is a perfect square, and  $\phi=\phi_{\pm W}$. Thus, we proved the following proposition.

\begin{proposition}\label{prop: phi}
	Let $\{a,b,c\}$ be a rational Diophantine triple such that $S(a,b,c)=0$, $E_{a,b,c}$ the corresponding elliptic curve and $W\in E_{a,b,c}(\Q)$, $6W \ne \mathcal{O}$, a point for which $1-x(W)$ is a perfect square. Then $\phi_W$ defines an isomorphism between $E_{a,b,c}$ and $E_{a',b',c'}$, where $\{a',b',c'\}$ is a rational Diophantine triple, determined up to the sign, for which $S(a',b',c')=0$. Furthermore, every rational Diophantine triple $\{a',b',c'\}$ with the property that $S(a',b',c')=0$ and $E_{a',b',c'} \cong E_{a,b,c}$ can be obtained in this manner.
\end{proposition}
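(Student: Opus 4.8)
The plan is to prove the two halves of the proposition in turn: that for an admissible $W$ the map $\phi_W$ is a well-defined isomorphism onto the induced curve of a \emph{special} triple, and that conversely every special triple whose induced curve is isomorphic to $E_{a,b,c}$ is produced this way.

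For the first half I would begin by substituting the inverse change of variables $x = k^2 X + 1 - k^2$, $y = k^3 Y$ into $y^2=(x+ab)(x+ac)(x+bc)$: each linear factor becomes $k^2(X+\text{const})$, so after dividing by $k^6$ the image curve $\tilde E$ takes the shape $Y^2=(X+A)(X+B)(X+C)$ with $A,B,C\in\Q$ pairwise distinct (because $ab,ac,bc$ are, the triple being Diophantine), so $\tilde E$ is smooth. Since $1-x(W)=k^2$, the point $\phi_W(W)$ has abscissa $x(W)/k^2+1-1/k^2=0$ and ordinate $y(W)/k^3$, which is nonzero because $6W\ne\mathcal{O}$ forbids $W$ from being $2$-torsion; hence $\tilde E$ carries a rational point with abscissa $0$ and nonzero ordinate, which forces $ABC$ to be a nonzero perfect square. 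In particular $A,B,C$ are all nonzero, and choosing a sign for $a'=\sqrt{BC/A}$ and then $b'=C/a'$, $c'=B/a'$ gives $a'b'=C$, $a'c'=B$, $b'c'=A$ with $a',b',c'$ nonzero and distinct, so $\tilde E=E_{a',b',c'}$ with the triple determined up to a common sign. Finally I would follow $S$: its abscissa $1$ is fixed by the substitution, so $\phi_W(S)$ has abscissa $1$ and therefore equals $\pm S'$, the only such points on $E_{a',b',c'}$; since $S=2R\in 2E_{a,b,c}(\Q)$ and $\phi_W$ is a group isomorphism, $\phi_W(S)\in 2E_{a',b',c'}(\Q)$, and the standard two-descent criterion applied to this non-two-torsion point with abscissa $1$ shows $1+A,1+B,1+C$ are all squares, i.e.\ $\{a',b',c'\}$ is a rational Diophantine triple. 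As $S'=\pm\phi_W(S)$ has the same order as $S$, namely $3$, Lemma~1 of \cite{DKMS} gives $S(a',b',c')=0$.

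For the converse, let $\{a',b',c'\}$ be special with an isomorphism $\psi\colon E_{a,b,c}\to E_{a',b',c'}$. Because both Weierstrass models have $a_1=a_3=0$, matching coefficients forces $\psi^{-1}$ to take the reduced form $(X,Y)\mapsto(u^2X+v,\,u^3Y)$ with $u\in\Q^\times$, $v\in\Q$, as any $s,t$ terms would reintroduce $XY$ or $Y$ monomials. Put $W=\psi^{-1}(P')$ with $P'=[0,a'b'c']$, so $x(W)=v$; using $\psi^{-1}(S')=\pm S$ (both of abscissa $1$) we get $u^2+v=1$, hence $1-x(W)=u^2$ is a perfect square. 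Comparing $\psi^{-1}$ with $\phi_W^{-1}\colon(X,Y)\mapsto(k^2X+1-k^2,\,k^3Y)$, where $k^2=1-v=u^2$, forces $k=\pm u$ and therefore $\psi=\phi_W$ or $\psi=\phi_{-W}$ (replacing $W$ by $-W$ flips the sign of $k$), the triple produced being exactly $\{a',b',c'\}$; one also checks the admissibility condition $6W\ne\mathcal{O}$, which is equivalent to $6P'\ne\mathcal{O}$.

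The step I expect to need the most care is the torsion bookkeeping. That the only points of abscissa $1$ are $\pm S$ (resp.\ $\pm S'$) and that $\phi_W(W)$ has abscissa $0$ are immediate, but the converse needs an isomorphism $E_{a,b,c}\to E_{a',b',c'}$ to carry $S$ to $\pm S'$; this is automatic when the rational $3$-torsion is cyclic --- as for the curve $E$ of the introduction, whose torsion is $\Z/6\Z$ --- and otherwise must be imposed or arranged by composing with an automorphism. I would also be careful about the precise two-descent statement: ``$x_0+A,\,x_0+B,\,x_0+C$ are squares'' holds for points of $2E(\Q)$ that are \emph{not} $2$-torsion, which is exactly why the hypothesis $6W\ne\mathcal{O}$ --- equivalently, that $\phi_W(S)=\pm S'$ is a genuine order-$3$ point --- is indispensable. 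Everything else reduces to routine coordinate algebra.
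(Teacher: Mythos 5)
Your proof is correct and follows essentially the same route as the paper: the same change of variables $\phi_W$, the observation that $X(\phi_W(W))=0$ forces $ABC$ to be a square, the two-descent argument applied to $\phi_W(S)\in 2E_{a',b',c'}(\Q)$ with abscissa $1$, and the normal form $(X,Y)\mapsto(u^2X+v,u^3Y)$ for the converse. Your extra care about $\psi^{-1}(S')=\pm S$ is resolved by noting that rational $3$-torsion over $\Q$ is always cyclic (the Weil pairing would otherwise force $\zeta_3\in\Q$), a point the paper takes for granted.
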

\begin{remark}
The condition  $1-x(W)=k^2$ is a perfect square defines a curve $$y^2=(1-k^2+ab)(1-k^2+ac)(1-k^2+bc).$$ If $rst\ne 0$ (or equivalently, if $S$ is not a point of order $2$), this curve has genus two. Consequently, in our situation, only a finite number of points $W\in E_{a,b,c}(\Q)$ satisfy the required property. The point $P=[0,abc]$ induces the identity map.
\end{remark}

For specificity, we will select elements $a', b'$, and $c'$ such that $\phi_W([-ab,0]) = [-a'b',0]$, $\phi_W([-ac,0]) = [-a'c',0]$, and $\phi_W([-bc,0]) = [-b'c',0]$. Note that the triple $\{a',b',c'\}$ is determined only up to the sign.

\section{Another view on families $\mathcal{F}_i$}
We start with elements of the family $\mathcal{F}_1$. Let $\{a,b,c\}$ be a special rational Diophantine triple ($a^2+1$ and $b^2+1$ are perfect squares and $S(a,b,c)=0$) for which $r_5(a,a,b,b,c)=0$ (i.e. $(a,a,b,b,c)$ is a regular quintuple). Let $A,B\in E_{a,b,c}(\Q)$ for which $x(A)=a \cdot abc$ and $x(B)=b\cdot abc$ (these points are rational since $\{a,b\}$ is a strong pair). Proposition \ref{prop:regularity} implies that the regularity condition is equivalent to $A\pm B \pm S=\mathcal{O}$ for some choice of sign. We can choose $A, B$ and $S$ so that $A+B+S=\mathcal{O}$ (recall that $S$ is a point of order $3$ with $x(S)=1$). Let $W_1=A+T_3$ and $W_2=B+T_2$, where $T_2=[-ac,0]$ and $T_3=[-bc,0]$ are the points of order $2$.

It follows from the following result (Proposition 4 in \cite{DKMS}) that $1-x(W_1)$ and $1-x(W_2)$ are perfect squares.

\begin{proposition}\label{prop:imrn}
	Let $Q$, $T$ and $[0,\alpha]$ be three rational points on an elliptic curve $\mathcal{E}$ over $\mathbb{Q}$
	given by the equation $y^2=f(x)$, where $f$ is a monic polynomial of degree $3$.
	Assume that $\mathcal{O} \not\in \{Q,T,Q+T\}$.
	Then
	$$ x(Q)x(T)x(Q+T)+ \alpha^2 $$
	is a perfect square.
\end{proposition}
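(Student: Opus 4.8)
The plan is to prove this by a direct computation with the chord-and-tangent group law on $\mathcal{E}$, with the point $[0,\alpha]$ entering only through the identity $\alpha^2=f(0)$. Write $f(x)=x^3+px^2+qx+r$ with $p,q,r\in\mathbb{Q}$; since $[0,\alpha]\in\mathcal{E}(\mathbb{Q})$ we have $\alpha^2=f(0)=r$. Put $Q=(x_1,y_1)$ and $T=(x_2,y_2)$, both defined because $Q,T\neq\mathcal{O}$, and let $x_3=x(Q+T)$, defined because $Q+T\neq\mathcal{O}$. The whole argument rests on reading off the product $x_1x_2x_3$ from the constant term of the cubic cut out by the line joining $Q$ and $T$.

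First I would handle the generic case $x_1\neq x_2$. Let $y=\lambda x+\nu$ be the secant line through $Q$ and $T$, so that $\lambda=(y_2-y_1)/(x_2-x_1)$ and $\nu=y_1-\lambda x_1$ are rational. By definition of the group law this line meets $\mathcal{E}$ in the three points $Q$, $T$, and $-(Q+T)=(x_3,-y_3)$, so the monic cubic $g(x):=f(x)-(\lambda x+\nu)^2$ has $x_1,x_2,x_3$ as its roots (with multiplicity), whence $g(x)=(x-x_1)(x-x_2)(x-x_3)$. Evaluating this identity at $x=0$ gives $f(0)-\nu^2=-x_1x_2x_3$, i.e.
$$x(Q)\,x(T)\,x(Q+T)+\alpha^2=\nu^2;$$
since $\nu\in\mathbb{Q}$, this is a perfect square, as claimed.

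Next I would treat the case $x_1=x_2$. On $\mathcal{E}$ this forces $y_2=\pm y_1$. If $y_2=-y_1$ with $y_1\neq0$ then $T=-Q$ and $Q+T=\mathcal{O}$, which is excluded; if $y_1=y_2=0$ then $Q=T$ is a point of order two, so again $Q+T=\mathcal{O}$, excluded. Hence the only surviving subcase is $Q=T$ with $y_1\neq0$, and there I replace the secant by the tangent line to $\mathcal{E}$ at $Q$, with (well-defined, rational) slope $\lambda=f'(x_1)/(2y_1)$ and intercept $\nu=y_1-\lambda x_1$. Now $x_1$ is a double root and $x_3=x(2Q)$ the remaining root of $g(x)=f(x)-(\lambda x+\nu)^2$, so $g(x)=(x-x_1)^2(x-x_3)$, and evaluating at $x=0$ yields $x_1^2\,x_3+\alpha^2=\nu^2$, again a perfect square; note $x(Q)\,x(T)\,x(Q+T)=x_1^2x_3$ here, so this is precisely the asserted identity.

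I do not anticipate a genuine obstacle. The only substantive point is the case analysis guaranteeing that the relevant line (secant or tangent) is non-vertical, which is exactly what the hypothesis $\mathcal{O}\notin\{Q,T,Q+T\}$ provides; everything else is the routine observation that for a monic cubic $g$ cut out on $\mathcal{E}$ by a line, the product of the $x$-coordinates of the three intersection points equals $-g(0)=\nu^2-f(0)$.
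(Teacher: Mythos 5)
Your proof is correct and complete: the case analysis (secant when $x(Q)\neq x(T)$, tangent when $Q=T$ with $y(Q)\neq 0$, and the excluded vertical-line cases being exactly those ruled out by $\mathcal{O}\notin\{Q,T,Q+T\}$) is airtight, and the identity $x(Q)x(T)x(Q+T)+f(0)=\nu^2$ read off from the constant term of $f(x)-(\lambda x+\nu)^2$ is exactly what is needed. Note that the paper itself gives no proof of this statement --- it is quoted as Proposition 4 of \cite{DKMS} --- and your argument is the same standard chord-and-tangent computation used there, so you have in effect supplied the omitted proof rather than found a different route.
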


Indeed, for $\mathcal{E}=E_{a,b,c}$ we have that $$x(W_1)x(T_3)x(A)+(abc)^2=x(W_1)(-bc)a\cdot abc+(abc)^2=(abc)^2(1-x(W_1))$$ is a perfect square. Similarly, we obtain that $1-x(W_2)$ is a perfect square.

Let $\phi_{W_1}:E_{a,b,c} \rightarrow E_{a',b',c'}$ be an isomorphism from Proposition \ref{prop: phi} associated to the point $W_1$.
The following proposition implies that a rational Diophantine triple $\{a',b',c'\}$ is special, satisfying the regularity condition \eqref{eg:reg}, and thus belongs to the $\mathcal{F}_2$ family.

\begin{proposition} We have that $a'^2=a^2$ and $b'=\frac{x(\phi_{W_1}(B+T_3))}{a'b'c'}$.
\end{proposition}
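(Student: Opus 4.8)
The plan is to make the isomorphism $\phi_{W_1}$ completely explicit and then extract both assertions from the group law on $E_{a,b,c}$. Set $a^2+1=u^2$, $b^2+1=v^2$, $ab+1=r^2$, $ac+1=s^2$, $bc+1=t^2$ with $u,v,r,s,t\in\Q^\times$. First I would pin down $k$: using $x(A)=a\cdot abc$ and the addition formula for the $2$-torsion point $T_3=[-bc,0]$ one gets $x(W_1)=x(A+T_3)=-bc+\frac{(a-b)(a-c)}{a^2+1}$, so that $1-x(W_1)=\frac{(ab+1)(ac+1)}{a^2+1}=(rs/u)^2$. Hence $k^2=r^2s^2/u^2$ and $\phi_{W_1}$ is the map $(x,y)\mapsto\bigl(\tfrac{u^2}{r^2s^2}(x-1)+1,\ \tfrac{u^3}{r^3s^3}\,y\bigr)$. (That $1-x(W_1)$ is a square is already guaranteed by Proposition~\ref{prop:imrn}; what is needed here is its value.)

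Next I would compute the images of the three $2$-torsion points. Applying $\phi_{W_1}$ to $T_1=[-ab,0]$, $T_2=[-ac,0]$, $T_3$ and simplifying with $r^2/k^2=u^2/s^2$, $s^2/k^2=u^2/r^2$ and the identities $u^2-s^2=a(a-c)$, $u^2-r^2=a(a-b)$, $t^2u^2-r^2s^2=(a-b)(a-c)$, one obtains $a'b'=\frac{a(a-c)}{ac+1}$, $a'c'=\frac{a(a-b)}{ab+1}$, $b'c'=\frac{(a-b)(a-c)}{(ab+1)(ac+1)}$. The first assertion drops out: $a'^2=\frac{(a'b')(a'c')}{b'c'}=a^2$. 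At the same time $b'^2$ and $c'^2$ (each obtained by dividing a product of two of these by the third) equal $\frac{(a-c)^2}{(ac+1)^2}$ and $\frac{(a-b)^2}{(ab+1)^2}$; since $\bigl(\tfrac{a-b}{ab+1}\bigr)^2+1=(uv/r^2)^2$ is a square, the element $\pm\frac{a-b}{ab+1}$ is the strong partner of $a'=\pm a$, and it is this element that is the ``$b'$'' once $\{a',b',c'\}$ is put in the $\mathcal{F}_2$-ordering (strong pair first).

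For the second assertion I would add $T_3$ to $B$ (with $x(B)=b\cdot abc$), getting $x(B+T_3)=-bc+\frac{(a-b)(a-c)}{ab+1}$, and push it through $\phi_{W_1}$: $x(\phi_{W_1}(B+T_3))=\tfrac{u^2}{r^2s^2}\bigl(x(B+T_3)-1\bigr)+1$. The only genuinely algebraic point is that the numerator of the right-hand side factors as $a(a-b)^2(a-c)$, giving $x(\phi_{W_1}(B+T_3))=\frac{a(a-b)^2(a-c)}{(ab+1)^2(ac+1)}$. Since $(a'b'c')^2=(a'b')(a'c')(b'c')$ forces $a'b'c'=\pm\frac{a(a-b)(a-c)}{(ab+1)(ac+1)}$, and the asserted identity is unchanged under flipping the sign of the whole triple, division yields $\frac{x(\phi_{W_1}(B+T_3))}{a'b'c'}=\frac{a-b}{ab+1}=b'$. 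As a conceptual cross-check: from $A+B+S=\mathcal{O}$ and $2T_3=\mathcal{O}$ we have $B+T_3=-(A+T_3)-S=-W_1-S$, so $\phi_{W_1}(B+T_3)=-\phi_{W_1}(W_1)-\phi_{W_1}(S)$ is of the form $\mp P'\mp S'$; hence $x(\phi_{W_1}(B+T_3))/(a'b'c')$ is one of the two regular extensions of $\{a',b',c'\}$, and plugging the explicit $a',b',c'$ into $r_4(a',b',c',b')$ (with $b'=\pm\frac{a-b}{ab+1}$) shows it vanishes precisely when $c^2(ab+1)^2-4(ac+1)(bc+1)=0$, i.e.\ precisely under the hypothesis $r_5(a,a,b,b,c)=0$, so $b'$ is indeed such an extension.

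The main obstacle is bookkeeping rather than depth: one has to fix $\phi_{W_1}$ precisely (the value of $k$ together with a consistent sign choice for $a',b',c'$), and above all to make sure that $x(\phi_{W_1}(B+T_3))/(a'b'c')$ gets matched with the element labelled $b'$ in the ordering that places $\{a',b',c'\}$ in $\mathcal{F}_2$ — the strong partner of $a'$ — rather than with $c'$. Once the conventions are nailed down, everything reduces to the addition law for a $2$-torsion point and the factorization of one quartic.
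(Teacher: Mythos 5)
Your proof is correct and follows essentially the same route as the paper: the same value $k^2=\frac{(ab+1)(ac+1)}{a^2+1}$ for $1-x(W_1)$, the same images of the three $2$-torsion points under $\phi_{W_1}$, and $a'^2=a^2$ extracted from their ratio. For the second identity the paper simply appeals to a direct MAGMA computation, whereas you carry it out by hand — the factorization of the numerator of $x(\phi_{W_1}(B+T_3))$ as $a(a-b)^2(a-c)$ and the identification of $\pm\frac{a-b}{ab+1}$ as the strong partner of $a'$ — and in doing so you correctly pin down the labelling of this element as $b'$, a point worth flagging since the paper's earlier ``for specificity'' convention would literally have called it $c'$.
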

\begin{proof}
It is easy to check that $x(W_1)=1-k^2$, where $k^2=\frac{(ab+1)(ac+1)}{a^2+1}$. Hence
\[
\begin{aligned}
	\phi_{W_1}([-ab,0]) &= \left[ -\frac{a (a-c)}{a c+1}, 0 \right], \\
	\phi_{W_1}([-ac,0]) &= \left[ -\frac{a (a-b)}{a b+1}, 0 \right], \\
	\phi_{W_1}([-bc,0]) &= \left[ -\frac{(a-b) (a-c)}{(a b+1) (a c+1)}, 0 \right].
\end{aligned}
\]
Since $-a'^2=\frac{x(\phi_{W_1}([-ab,0]))x(\phi_{W_1}([-ac,0]))}{x(\phi_{W_1}([-bc,0]))}$, it follows that $a'^2=a^2$.
The second statement follows from direct computation in MAGMA.

\end{proof}

It follows that $\{a',b'\}$ is a strong pair since $a'^2+1=a^2+1$ is a perfect square, and $b'^2+1$ is a perfect square since the point $B'= \phi_{W_1}(B+T_3)$, with $x(B')=b'\cdot a'b'c'$ is rational. Moreover,
\begin{align*}
	\mathcal{O} &= \phi_{W_1}(A+B+S) \\
	&= \phi_{W_1}(A+T_3) + \phi_{W_1}(B+T_3) + \phi_{W_1}(S) \\
	&= P' + B' + S',
\end{align*}
which, according to Proposition \ref{prop:regularity}, implies the regularity condition $r_4(a',b',b',c')=0$.

More precisely, through direct computation, we derive the following proposition.

\begin{proposition}
	Let $(u_0,v_0)\in \mathcal{C}(\Q)$ be a rational point on the curve $\mathcal{C}$, $[a,b,c]=\mathcal{F}_1(u_0,v_0)$ the corresponding Diophantine triple, and $W_1,W_2\in E_{a,b,c}(\Q)$ points defined as above. The triples associated to points $W_1$ and $W_2$ by Proposition \ref{prop: phi} are equal to $\mathcal{F}_2(u_0,v_0)$ and  $\mathcal{F}_3(u_0,v_0)$ respectively. 
	
	Similarly, if $[a,b,c]=\mathcal{F}_2(u_0,v_0)$  then the triples associated to points $W_1$ and $W_2$ are equal to $\mathcal{F}_1(u_0,v_0)$ and  $\mathcal{F}_3(u_0,v_0)$ respectively, and if  $[a,b,c]=\mathcal{F}_3(u_0,v_0)$  then the triples associated to points $W_1$ and $W_2$ are equal to $\mathcal{F}_1(u_0,v_0)$ and  $\mathcal{F}_2(u_0,v_0)$ respectively. 
\end{proposition}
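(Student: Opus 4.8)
The plan is to verify the proposition by an explicit symbolic computation, using the machinery already assembled in Proposition \ref{prop: phi} and the preceding propositions. First I would fix a rational point $(u_0,v_0)\in\mathcal{C}(\Q)$ as a formal pair of indeterminates subject to $p(u_0,v_0)=0$, and write down $[a,b,c]=\mathcal{F}_1(u_0,v_0)$ together with the distinguished points on $E_{a,b,c}$: the $2$-torsion $T_1,T_2,T_3$, the order-$3$ point $S=[1,rst]$, and the points $A,B$ coming from the strong pair (with signs normalised so that $A+B+S=\mathcal{O}$, as in the previous section). Then I would form $W_1=A+T_3$ and $W_2=B+T_2$ explicitly via the group law, and apply the isomorphism $\phi_{W_1}$ (resp.\ $\phi_{W_2}$) of Proposition \ref{prop: phi}, using the already-computed fact that $x(W_1)=1-k^2$ with $k^2=(ab+1)(ac+1)/(a^2+1)$ (and the analogous expression for $W_2$). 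From the images $\phi_{W_1}([-ab,0])$, $\phi_{W_1}([-ac,0])$, $\phi_{W_1}([-bc,0])$ computed in the preceding proposition one reads off $a',b',c'$ up to sign, using the sign normalisation fixed at the end of Section~5 ($\phi_{W_1}([-ab,0])=[-a'b',0]$, etc.).

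The core of the argument is then a direct identification: substituting the closed-form expressions for $a',b',c'$ obtained this way, one checks that $[a',b',c']=\mathcal{F}_2(u_0,v_0)$ as an identity of rational functions modulo the ideal generated by $p(u_0,v_0)$. Concretely, this reduces to checking that the difference of each coordinate (cleared of denominators) lies in the ideal $(p(u_0,v_0))\subseteq\Q[u_0,v_0]$, which is a routine Gr\"obner-basis / polynomial-division verification. The same computation with $W_2$ in place of $W_1$ yields $[a',b',c']=\mathcal{F}_3(u_0,v_0)$; here one can shortcut using Remark \ref{rem:1}, since $W_2$ plays the role symmetric to $W_1$ under the involution $(u,v)\mapsto(v,u)$ that swaps $a$ and $b$, and $\mathcal{F}_3$ is obtained from $\mathcal{F}_2$ precisely by that symmetry composed with $(u,v)\mapsto(-u,1/v)$. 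For the second and third batches of statements (starting from $\mathcal{F}_2$ or $\mathcal{F}_3$), I would invoke the maps $\alpha:\mathcal{C}\to\mathcal{D}$ and $\sigma:(u,v)\mapsto(1/u,-v)$ already introduced in the proof of Theorem \ref{thm:1}, together with the fact — established in Section~6 for the $\mathcal{F}_1$ case — that the triple attached to $W_1$ satisfies $r_4(a',b',b',c')=0$ while the one attached to $W_2$ satisfies the mirrored regularity condition; combined with the classification results at the end of Section~4 (that every special triple with $r_4(a,a,b,c)=0$ lies in $\mathcal{F}_2$, and every one with $r_5(a,a,b,b,c)=0$ lies in $\mathcal{F}_1$), this pins down which family each $W_i$-triple belongs to, and the explicit computation then pins down the parameter values $(u_0,v_0)$.

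The main obstacle I anticipate is purely one of bookkeeping rather than mathematics: the points $W_1,W_2$, the constant $k$, and the isomorphism $\phi_{W_i}$ all involve square roots ($r,s,t,u,v$ in the notation of Section~2), so the intermediate expressions for $a',b',c'$ are not a priori rational functions of $u_0,v_0$ until one simplifies using the defining relations $ab+1=r^2$, etc., and the curve equation $p(u_0,v_0)=0$. Keeping the sign choices consistent — the sign of $k$ tied to the sign of $y(W)$, the signs in $A+B+S=\mathcal{O}$, and the end-of-Section-5 normalisation of $a',b',c'$ — is where an error is most likely to creep in, and is the only point that genuinely needs care; everything downstream is a mechanical check best delegated to MAGMA, exactly as the paper does for the analogous second statement of the previous proposition.
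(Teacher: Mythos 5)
Your proposal is correct and matches the paper's approach: the paper proves this proposition purely ``through direct computation'' (symbolic verification in MAGMA of the identities modulo the curve equation), which is exactly the computation you describe, and the structural facts you invoke (the images of the $2$-torsion under $\phi_{W_1}$, the regularity condition $r_4(a',b',b',c')=0$, and the classification of special triples into the families $\mathcal{F}_i$) are precisely the ones the paper establishes immediately beforehand to motivate that computation. Your attention to the sign normalisations is the right place to focus care; nothing in your plan would fail.
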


\subsection*{Example}

We now go back to our starting numerical examples from Section \ref{sec:experiments}. Consider first a special rational Diophantine triple $\{a,b,c\}$ where $a=30464/2223$, $b=22815/5168$ and  $c=361/7956$. Note that $\{a,b,c\}=\mathcal{F}_1(u_0,v_0)$, where $(u_0,v_0)=(-119/128, -135/169)$ is a rational point on the curve $\mathcal{C}$. Consider the rational points
\[
A = [ 250880/6669, 94938136300/252028179 ],
\]
\[
 B = [ 266175/21964, 18177179755/170264928 ],
\]
on $E_{a,b,c}$ which correspond to the strong elements $a$ and $b$.
Let $S=[1,-3307949/302328]$ be a point of order $3$. The regularity condition $r_5(a,a,b,b,c)=0$ is then equivalent to $A+B+S=\mathcal{O}$. Let $W_1=A+[-bc,0]=[19824/42025,-726438832196/108524729625]$ and $W_2=B+[-ac,0]=[-64155/24649,29291888395/1764671208]$. When we apply Proposition \ref{prop: phi} to the points $W_1$ and $W_2$ (recall that $1-x(W_1)$ and $1-x(W_2)$ are perfect squares), using the isomorphisms $\phi_{W_1}$ and $\phi_{W_2}$ respectively, we obtain triples $\mathcal{F}_2(u_0,v_0)=\{ \frac{30464}{2223}, \frac{4807}{31824}, \frac{10881}{1292}\}$ and $\mathcal{F}_3(u_0,v_0)=\{ \frac{-22815}{5168}, \frac{4807}{31824}, \frac{-8092}{2223} \}$ from our introductory example.

{\bf Acknowledgements.}
The authors wish to thank Randall Rathbun for providing examples of Diophantine sextuples with one strong element, which served as inspiration for the exploration of this topic.
The authors were supported by the Croatian Science Foundation under the project no.~IP-2022-10-5008.
This work was supported by the project “Implementation of cutting-edge research and its application as part of the Scientific Center of Excellence for Quantum and Complex Systems, and Representations of Lie Algebras“, PK.1.1.02, European Union, European Regional Development Fund.

\end{document}